\documentclass{amsart}

\usepackage[all]{xy}
\usepackage{graphicx}
\usepackage{amssymb}
\usepackage{mathrsfs}
\usepackage{multirow}
\usepackage{float}
\usepackage{tikz-cd}
\usepackage{adjustbox}
\usepackage{amsthm}
\usepackage{accents}
\usepackage{mathtools}
\usepackage{enumitem,cite,array}
\usepackage[new]{old-arrows}
\usepackage{tikz}
\usetikzlibrary{matrix,arrows}
\usepackage{amsmath}

\usepackage[numbers,sort&compress]{natbib}
\usepackage[bookmarksnumbered, bookmarksopen,
colorlinks,citecolor=blue,linkcolor=blue,backref]{hyperref}

\newcounter{RomanNumber}

\newtheorem{theorem}{Theorem}

\theoremstyle{definition}

\newtheorem{proposition}[theorem]{Proposition}
\newtheorem{corollary}[theorem]{Corollary}

\theoremstyle{remark}

\theoremstyle{notation}



\newcommand{\be}{\begin{equation}}
\newcommand{\ee}{\end{equation}}

\newcommand{\CC}{\mathbb{C}}

\newcommand{\Z}{\mathbb{Z}}

\newcommand{\N}{\mathbb{N}}

\newcommand{\h}{\begin{eqnarray*}}
\newcommand{\e}{\end{eqnarray*}}

\newcommand{\ep}{\varepsilon}

\usepackage[utf8]{inputenc}
\usepackage{chngcntr}
\usepackage{apptools}
\AtAppendix{\counterwithin{theorem}{section}}
\usepackage[toc,page]{appendix}

\newcommand{\qqed}{\hfill\Box}

\begin{document}

\keywords{}

\title
{Elliptic genus and string cobordism at dimension $24$}

\author{Fei Han}
\address{Department of Mathematics,
National University of Singapore, Singapore 119076}
\curraddr{}
\email{mathanf@nus.edu.sg}
\thanks{}
\urladdr{https://blog.nus.edu.sg/mathanf/}


 \author{Ruizhi Huang}
\address{Ruizhi Huang, Institute of Mathematics and Systems Sciences, Chinese Academy of Sciences, Beijing 100190, China}
\email{huangrz@amss.ac.cn}  
\urladdr{https://sites.google.com/site/hrzsea}
\thanks{}

\date{}

\maketitle

\begin{abstract} It is known that spin cobordism can be determined by Stiefel-Whitney numbers and index theory invariants, namely $KO$-theoretic Pontryagin numbers. In this paper, we show that string cobordism at dimension 24 can be determined by elliptic genus, a higher index theory invariant. We also compute the image of 24 dimensional string cobordism under elliptic genus. Using our results, we show that under certain curvature conditions, a compact 24 dimensional string manifold must bound a string manifold.  

\end{abstract}


\section{Introduction}
Cobordism is a fundamental tool in geometry and topology. For the oriented cobordism ring $\Omega_\ast^{SO}$, there are spin cobordism $\Omega_\ast^{Spin}$ and string cobordism $\Omega_\ast^{String}$ as refinements through the Whitehead tower
\[
\cdots \longrightarrow String \longrightarrow Spin\longrightarrow SO.
\] 

It is a classical problem to classify cobordism classes in terms of characteristic numbers. Historically, Wall \cite{Wall60} showed that two closed oriented manifolds are oriented cobordant if and only if they have the same Stiefel-Whitney numbers and Pontryagin numbers. Later, Anderson, Brown and Peterson \cite{ABP67} showed that two closed spin manifolds are spin cobordant if and only if they have the same Stiefel-Whitney numbers and $KO$-theoretic characteristic numbers. 

The problem for string manifolds is much more complicated. To our best knowledge, it is unknown yet which set of characteristic numbers classifies string cobordism. It is expected that $TMF$-theoretic characteristic numbers will play the similar role for string cobordism as $KO$-theoretic characteristic numbers does for spin cobordism. Here $TMF$ stands for the topological modular form developed by Hopkins and Miller \cite{Hop02}. The Witten genus \cite{Wit87, Wit88} plays the similar role in $TMF$ as the $\widehat{A}$-genus does in $KO$ and is refined to be the $\sigma$-orientation from the Thom spectrum of  string cobordism to the spectrum $TMF$ \cite{Hop02, AHS01}. 

In this paper we show that the elliptic genus \cite{Och87}, a higher index theoretic invariant, determines 24 dimensional string cobordism. As elliptic genus is a twisted Witten genus \cite{Wit87, Wit88, Liu95, Liu95JDG}, it can be viewed as sort of $TMF$-theoretic characteristic numbers. This coincides with the expectation of the role that $TMF$-theoretic characteristic numbers should play for string cobordism. In the paper, we also compute the image of 24 dimensional string cobordism under elliptic genus as well as give some application of our results in geometry. It is worthwhile to remark that $24$ is a dimension of special interest for string geometry. For instance, in this dimension, one has (cf. page $85$-$87$ in \cite{HBJ94})
\[
W(M)=\widehat{A}(M)\bar{\Delta}+\widehat{A}(M,T)\Delta,
\] where $W(M)$ is the Witten genus of $M$, $\widehat{A}(M)$ is the A-hat genus and $\widehat{A}(M,T)$ is the tangent bundle twisted A-hat genus of $M$, 
$\bar{\Delta}=E_4^3-744\cdot \Delta$
with $E_4$ being the Eisenstein series of weight $4$ and $\Delta$ being the modular discriminant of weight $12$. Hirzebruch raised his prize question in \cite{HBJ94} that whether there exists a $24$ dimensional compact string manifold $M$ such that $W(M)=\bar{\Delta}$ (or equivalently $\widehat{A}(M)=1, \widehat{A}(M, T)=0$) and the Monster group acts on $M$ as self-diffeomorphisms.
The existence of such manifold was confirmed by Mahowald-Hopkins \cite{MH02}. They determined the image of Witten genus at this dimension via $TMF$. Recently Milivojevi\'c \cite{Mili} used rational homotopy theory to give a weak form solution to the Hirzebruch's prize question. However, the part of the question concerning the Monster group action is still open.

The {\it elliptic genus}, which was first constructed by Ochanine \cite{Och87} and Landweber-Stong \cite{LS88}, is a graded ring homomorphism
\begin{equation}\label{elleq}
\phi: \Omega_\ast^{SO}\longrightarrow \mathbb{Z}\big[\frac{1}{2}\big] [\delta, \varepsilon]
\end{equation} from the oriented cobordism ring to the graded polynomial ring  $\mathbb{Z}\big[\frac{1}{2}\big] [\delta, \varepsilon]$ with the degrees $|\delta|=4$, $|\ep|=8$, such that the logarithm is given by the formal integral
\begin{equation}\label{logeq}
g(z)=\int_{0}^{z}\frac{dt}{\sqrt{1-2\delta t^2 +\varepsilon t^4}}.
\end{equation}
The background and the developments of the theory of elliptic genus can be found in  \cite{Lan88-1, Se88, KS93, HBJ94, Liu96, Hop02, Wit88}. 

It is shown (\cite{CCLOS}, c.f.\cite{Lan88}) that the image of the elliptic genus  is
\be \label{oriented}  \phi(\Omega_\ast^{SO})=\mathbb{Z}[\delta, 2\gamma, 2\gamma^2, \cdots, 2\gamma^{2^s}, \cdots], \ee
where $\gamma=\frac{\delta^2-\varepsilon}{4}$; and when restricted to spin cobordism
\be \phi(\Omega_\ast^{Spin})=\mathbb{Z}[16\delta, (8\delta)^2, \varepsilon].\ee
It follows that at dimension $24$ the image is spanned over $\mathbb{Z}$ by 
$$(8\delta)^6, \ (8\delta)^4\varepsilon,  \ (8\delta)^2\varepsilon^2, \ \varepsilon^3. $$
The map $\phi: \Omega_{24}^{Spin}\to \mathbb{Z}[8\delta, \varepsilon]$ has nontrivial kernel. Actually $E-F\cdot B$ is in the kernel, where $E$ is the total space of a fiber bundle of compact and connected structure group with $F$ being spin manifold as fiber and $B$ being the base. This comes from the multiplicativity of elliptic genus \cite{Och88}, which is  equivalent to the Witten-Bott-Taubes-Liu rigidity \cite{BT, Tau, Liu96TOP}.

Our main result is stated as follows.

\begin{theorem}\label{ellipticinjthm}
The elliptic genus 
\[
\phi: \Omega_{24}^{String}\longrightarrow \mathbb{Z}[8\delta, \varepsilon]
\]
is injective and its image is a subgroup of  $\mathbb{Z}[8\delta, \varepsilon]$ spanned by 
$$(8\delta)^6, \  \  24 (8\delta)^4 \varepsilon,  \  \ (8\delta)^2\varepsilon^2, \  \ 8\varepsilon^3.$$
\end{theorem}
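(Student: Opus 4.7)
The plan is to combine a structural result on $\Omega_{24}^{String}$ with an explicit computation of $\phi$ on a set of generators. First, one analyzes the abstract structure of $\Omega_{24}^{String}$: since $H^*(BString;\mathbb{Q})=\mathbb{Q}[p_2,p_3,\dots]$, Thom's theorem gives $\mathrm{rank}\,\Omega_{24}^{String}=4$, matching the four partitions $6,\ 4{+}2,\ 3{+}3,\ 2{+}2{+}2$ of $6$ into parts $\geq 2$. An Anderson-Brown-Peterson-type splitting of $MString$ at the prime $2$, combined with the factorization of odd-primary information through $tmf$, can then be used to verify that $\Omega_{24}^{String}$ is torsion-free in this degree, so $\Omega_{24}^{String}\cong\mathbb{Z}^4$. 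Since $\mathbb{Z}[8\delta,\varepsilon]_{24}$ is also free of rank $4$, injectivity reduces to showing that the image has rank $4$.

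Second, I would exhibit four explicit $24$-dimensional string manifolds $M_1,M_2,M_3,M_4$ whose elliptic genera are $\mathbb{Z}$-linearly independent and in fact generate the claimed lattice. Good candidates are products of lower-dimensional string generators (for instance cubes and products involving $K3$-type, $8$-, $12$-, and $16$-dimensional string classes), together with a Hopkins-Mahowald $24$-dimensional string manifold realizing Witten genus $\bar\Delta$. For each $M_i$ one expands $\phi(M_i)$ via the formal logarithm \eqref{logeq} into a combination of Pontryagin numbers; the string condition $p_1=0$ kills all monomials containing a $p_1$ factor, and the remaining terms present $\phi(M_i)$ as an integer combination of the basis $(8\delta)^6,\ (8\delta)^4\varepsilon,\ (8\delta)^2\varepsilon^2,\ \varepsilon^3$ of $\mathbb{Z}[8\delta,\varepsilon]_{24}$.

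Third, one assembles these coefficients into a $4\times 4$ integer matrix, computes its Smith normal form, and verifies that the image inside $\phi(\Omega_{24}^{Spin})=\mathbb{Z}\langle(8\delta)^6,(8\delta)^4\varepsilon,(8\delta)^2\varepsilon^2,\varepsilon^3\rangle$ is precisely the sublattice of index $192=24\cdot 8$ spanned by $(8\delta)^6,\ 24(8\delta)^4\varepsilon,\ (8\delta)^2\varepsilon^2,\ 8\varepsilon^3$. Nonvanishing of the determinant then yields injectivity. The main obstacle is step two: pinning down the factors $24$ and $8$. These reflect nontrivial divisibilities of twisted $\widehat A$-numbers on string manifolds, forced by the relation $p_1=0$ together with the integrality of the $\sigma$-orientation to $tmf$ (the coefficient $24$ is reminiscent of the constants appearing in the decomposition $W(M)=\widehat A(M)\bar\Delta+\widehat A(M,T)\Delta$ and in $\bar\Delta=E_4^3-744\Delta$). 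The cleanest route is probably to establish these divisibilities \emph{a priori} for every $24$-dimensional string manifold, and then to exhibit explicit examples attaining the extremal values, rather than relying purely on Smith normal form of a matrix of generator images.
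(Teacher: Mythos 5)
There is a genuine gap: your outline leaves unproved exactly the two facts that constitute the theorem, and one of the tools you invoke does not exist. First, injectivity requires knowing that $\Omega_{24}^{String}\cong\mathbb{Z}^4$ with \emph{no torsion}, and you propose to get this from ``an Anderson--Brown--Peterson-type splitting of $MString$ at the prime $2$''; no such splitting is known (its existence is a well-known open problem), so this step has no foundation as stated. More importantly, even granting $\Omega_{24}^{String}\cong\mathbb{Z}^4$, your strategy needs a set of manifolds that is known to \emph{generate} the bordism group integrally, not merely four manifolds with independent elliptic genera: otherwise the Smith normal form of your $4\times 4$ matrix only bounds the image from below and says nothing about the exact lattice, i.e.\ about the constants $24$ and $8$. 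Your candidate generators (products of lower-dimensional string classes plus a Hopkins--Mahowald manifold) are not known to generate $\Omega_{24}^{String}$ -- decomposables do not suffice, and $K3$-type factors are not even string since $p_1\neq 0$. You acknowledge that pinning down the factors $24$ and $8$ is ``the main obstacle'' and suggest establishing the divisibilities a priori from $tmf$-integrality, but you do not carry this out; that divisibility statement is essentially the whole content of the theorem, so the proposal stops short of a proof.

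For comparison, the paper closes precisely these gaps by quoting its prior result (Theorem~\ref{basisthm}, from \cite{HH21}) that
$\kappa(M)=(\widehat{A}(M),\tfrac{1}{24}\widehat{A}(M,T),\widehat{A}(M,\Lambda^2),\tfrac{1}{8}{\rm Sig}(M))$
is an isomorphism $\Omega_{24}^{String}\to\mathbb{Z}^4$ together with an explicit integral basis $\{M_i\}$ and its matrix $K$. The elliptic-genus coefficients $a_i(M)$ are then expressed in terms of $\widehat{A}(M)$, $\widehat{A}(M,T)$, ${\rm Sig}(M,T)$, ${\rm Sig}(M)$ by comparing low-order $q$-expansions of the two concrete elliptic genera $Ell_1$, $Ell_2$ (Proposition~\ref{soai=ahatlemma}), and the twisted signature ${\rm Sig}(M,T)$ is eliminated via the modular-forms identity (\ref{ch15eq}) of Chen--Han, valid only under the string condition (Proposition~\ref{stringai=ahatprop}). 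Injectivity is then immediate from the invertibility of the resulting coefficient matrix and the injectivity of $\kappa$, and the exact image, including the factors $24$ and $8$, falls out of one matrix multiplication against $K$. If you want to salvage your route, you would need either to import the generating set and characteristic-number data of \cite{HH21} (at which point you are reproducing the paper's argument), or to prove the a priori divisibilities of the $(8\delta)^4\varepsilon$- and $\varepsilon^3$-coefficients for \emph{all} $24$-dimensional string manifolds and separately exhibit manifolds attaining them -- neither of which is supplied by the general remarks about the $\sigma$-orientation in your sketch.
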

The theorem shows us the following picture, 
$$\Omega_{24}^{String}\cong  \phi(\Omega_{24}^{String})\cong \Z\oplus 24\Z\oplus \Z\oplus 8\Z\leq \Z\oplus \Z\oplus \Z\oplus \Z\cong \phi(\Omega_{24}^{Spin}).$$
In particular, it supports the expectation that $TMF$-theoretic characteristic numbers will play the similar role for string cobordism as $KO$-theoretic characteristic numbers do for spin cobordism. 

The key to the proof of Theorem \ref{ellipticinjthm} is a result in \cite{HH21}, where we determine an integral basis of $\Omega_{24}^{String}$, which consists of two explicitly constructed manifolds in the kernel of the Witten genus, and another two manifolds constructed by Mahowald and Hopkins \cite{MH02} determining the image of the Witten genus. Then we can apply two concrete elliptic genera (\ref{expression}) to reduce the computations of the elliptic genus to those of classical twisted and untwisted genera on the generators of $\Omega_{24}^{String}$. The details are carried out in Section \ref{sec: pfthm}.

Theorem \ref{ellipticinjthm} has interesting application in geometry. A closed manifold $M$ is called almost flat if for any $\varepsilon >0$,  there is a Riemannian metric $g_\varepsilon$  on $M$ such that the diameter ${\rm diam}(M,g_{\varepsilon })\leq 1$ and $g_\varepsilon$  is $\varepsilon$-flat, i.e. for the sectional curvature $K_{g_{\varepsilon }}$, we have $|K_{g_{\epsilon }}|<\varepsilon$. Given $n$, there is a positive number $\varepsilon _{n}>0$  such that if an $n$-dimensional manifold admits an $\varepsilon _{n}$-flat metric with diameter $\leq1$,  then it is almost flat. The classical result of Gromov \cite{Gro} says that every almost flat manifold is finitely covered by a nilmanifold, and this was refined by Ruh \cite{Ruh} by proving that an almost flat manifold is diffeomorphic to an infranilmanifold. It has been conjectured by Farrell and Zdravkovska \cite{FZ83} and independently by Yau \cite{Yau93} that every almost flat manifold is the boundary of a closed manifold. Davis and Fang \cite{DF16} showed that this conjecture holds under the assumption that the $2$-sylow subgroup of holonomy group is cyclic or generalized quaternionic. The general case of the conjecture remains open. It is also pointed in \cite{DF16} that it is a difficult question that if every almost flat spin manifold (up to changing spin structures) bounds a spin manifold. 

By the Chern-Weil theory, it can be shown that the Pontryagin numbers of an oriented almost flat manifold $M$ all vanish (c.f. \cite{DF16}). Since the elliptic genus is determined by Pontryagin numbers, one can see from Theorem \ref{ellipticinjthm} that every 24 dimensional almost flat string manifold bounds a string manifold. 

In \cite{CH20}, vanishing results for elliptic genus were proven under almost nonpositive Ricci curvature condition. Corollary 2 there shows that 
given $n\in \N$ and positive number $\lambda$, there exists some $\varepsilon =\varepsilon(n, \lambda) > 0$ such that
if a compact $4n$-dimensional spin Rimannian manifold $(M, g)$ satisfies ${\rm diam}(M, g) \leq 1, {\rm Ric}(g)\leq \varepsilon$,
sectional curvature $\geq -\lambda$ and has infinite isometry group, then the elliptic genus of $M$ vanishes. Combining Theorem \ref{ellipticinjthm}, we obtain
\begin{corollary}\label{riccicoro}Given positive number $\lambda$, there exists some $\varepsilon =\varepsilon(\lambda) > 0$ such that
if a compact 24-dimensional string Rimannian manifold $(M, g)$ satisfies ${\rm diam}(M, g) \leq 1, {\rm Ric}(g) \leq \varepsilon,$
sectional curvature $\geq -\lambda$ and has infinite isometry group, then $M$ bounds a string manifold.  $\qqed$
\end{corollary}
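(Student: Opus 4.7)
The plan is to combine the vanishing result of \cite{CH20} with the injectivity half of Theorem \ref{ellipticinjthm}. Concretely, I would first invoke Corollary 2 of \cite{CH20} with $n=6$: it produces, for the given $\lambda$, a constant $\varepsilon(\lambda):=\varepsilon(6,\lambda)>0$ such that any compact $24$-dimensional spin Riemannian manifold $(M,g)$ with $\mathrm{diam}(M,g)\leq 1$, $\mathrm{Ric}(g)\leq \varepsilon(\lambda)$, sectional curvature $\geq -\lambda$, and infinite isometry group has vanishing elliptic genus. This applies verbatim to our string manifold $(M,g)$, since a string structure refines a spin structure and the elliptic genus depends only on the underlying oriented (in fact, Pontryagin-numerical) data.

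Next I would feed this vanishing into Theorem \ref{ellipticinjthm}. The cobordism class $[M]\in \Omega_{24}^{\mathrm{String}}$ satisfies $\phi([M])=0$. But Theorem \ref{ellipticinjthm} asserts that
\[
\phi:\Omega_{24}^{\mathrm{String}}\longrightarrow \mathbb{Z}[8\delta,\varepsilon]
\]
is injective, so $[M]=0$ in $\Omega_{24}^{\mathrm{String}}$. By definition of the string cobordism ring, this is precisely the statement that $M$ bounds a compact string manifold, which is the desired conclusion.

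There is essentially no obstacle beyond correctly matching the hypotheses of the two cited results. The one point that deserves a sentence of care is that the $\varepsilon$ appearing in Theorem \ref{ellipticinjthm} (the elliptic parameter of weight $8$) and the $\varepsilon$ appearing in the curvature bound are unrelated; I would rename the geometric constant, say $\varepsilon(\lambda)$, in the course of the proof to avoid clashing with the polynomial generator. Apart from this, the argument is a one-line concatenation: $\mathrm{Ric}\leq \varepsilon(\lambda)$ (together with the diameter, sectional, and isometry hypotheses) implies $\phi([M])=0$ by \cite{CH20}; injectivity of $\phi$ on $\Omega_{24}^{\mathrm{String}}$ from Theorem \ref{ellipticinjthm} then forces $[M]=0$, i.e.\ $M$ bounds a string manifold.
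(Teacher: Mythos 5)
Your proposal is correct and is exactly the argument the paper intends: apply Corollary 2 of \cite{CH20} with $n=6$ to conclude $\phi([M])=0$, then invoke injectivity from Theorem \ref{ellipticinjthm} to get $[M]=0$ in $\Omega_{24}^{\mathrm{String}}$, i.e.\ $M$ bounds a string manifold. The side remark about renaming the curvature threshold to avoid collision with the elliptic parameter $\varepsilon$ is a sensible bit of hygiene, but otherwise this matches the paper's (implicit) proof line for line.
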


\bigskip

\noindent{\bf Acknowledgements.}
Fei Han was partially supported by the grant AcRF R-146-000-263-114 from National University of Singapore. He thanks Prof. Kefeng Liu and Prof. Weiping Zhang for helpful discussions. Ruizhi Huang was supported by National Natural Science Foundation of China (Grant nos. 11801544 and 11688101), and ``Chen Jingrun'' Future Star Program of AMSS.


\numberwithin{equation}{section}
\numberwithin{theorem}{section}
\section{Preliminaries}
\label{sec: ell}
In this section we collect some necessary knowledge of elliptic genus used in the sequel. Details can be found in \cite{HBJ94}, \cite{Liu92}, \cite{Liu95}. 

Let $f$ be the formal inverse function of the logarithm $g$ in (\ref{logeq}). Then $Y=f', X=f$ solve the Jacobi quadrics
\begin{equation}\label{odefeq}
Y^2=1-2\delta \cdot X^2 +\varepsilon X^4.
\end{equation}
For concrete values of $\delta$ and $\varepsilon$, a solution $f$ gives an elliptic genus with logarithm $g$. For instance, when $\delta=\varepsilon=1$, $f(z)=\tanh z$ and $\phi$ reduces to the $L$-genus or the signature, and when $\delta=-1/8$, $\varepsilon=0$, $f(z)=2\sinh\frac{z}{2}$ and $\phi$ reduces to the $\widehat{A}$-genus.

Recall that the four Jacobi theta-functions (c.f. \cite{Cha85}) defined by
infinite multiplications are
\[
\begin{split}
\theta(v,\tau)&=2q^{1/8}\sin(\pi v)\prod_{j=1}^\infty[(1-q^j)(1-e^{2\pi \sqrt{-1}v}q^j)(1-e^{-2\pi
\sqrt{-1}v}q^j)], \\
\theta_1(v,\tau)&=2q^{1/8}\cos(\pi v)\prod_{j=1}^\infty[(1-q^j)(1+e^{2\pi \sqrt{-1}v}q^j)(1+e^{-2\pi
\sqrt{-1}v}q^j)], \\
 \theta_2(v,\tau)&=\prod_{j=1}^\infty[(1-q^j)(1-e^{2\pi \sqrt{-1}v}q^{j-1/2})(1-e^{-2\pi
\sqrt{-1}v}q^{j-1/2})],\\
\theta_3(v,\tau)&=\prod_{j=1}^\infty[(1-q^j)(1+e^{2\pi
\sqrt{-1}v}q^{j-1/2})(1+e^{-2\pi \sqrt{-1}v}q^{j-1/2})], 
\end{split}
\]
where
$q=e^{2\pi \sqrt{-1}\tau}$.
They are holomorphic functions for $(v,\tau)\in \mathbb{C \times
H}$, where $\mathbb{C}$ is the complex plane and $\mathbb{H}$ is the
upper half plane. Write $\theta_j=\theta_j(0, \tau), \ 1\leq j\leq 3$, and $\theta^\prime(0, \tau)=\frac{\partial}{\partial v}\theta(v, \tau)\big|_{v=0}$.

When
\begin{equation}\label{d1e1eq}
\begin{split}
\delta=\delta_1(\tau) &= \frac{1}{8} (\theta_2^4 +\theta_3^4)=\frac{1}{4}+6\sum\limits_{n=1}^{\infty}\sum\limits_{\substack{d|n \\ d~{\rm odd}}} d q^n=\frac{1}{4}+6q+6q^2+\cdots, \\
\varepsilon=\varepsilon_{1}(\tau)&=\frac{1}{16} \theta_2^4 \theta_3^4=\frac{1}{16}+\sum\limits_{n=1}^{\infty}\sum\limits_{\substack{d|n}} (-1)^d d^3 q^n=\frac{1}{16}-q+7q^2+\cdots,
\end{split}
\end{equation}
the equation (\ref{odefeq}) has the solution
\[
f_1(z, \tau)= 2\pi\sqrt{-1}\frac{\theta(z, \tau)}{\theta^\prime (0,\tau)} \frac{\theta_1(0,\tau)}{\theta_1(z, \tau)}.
\]
Similarly, when
\begin{equation}\label{d2e2eq}
\begin{split}
\delta=\delta_2(\tau)&= -\frac{1}{8} (\theta_1^4 +\theta_3^4)=-\frac{1}{8}-3\sum\limits_{n=1}^{\infty}\sum\limits_{\substack{d|n \\ d~{\rm odd}}} d q^{n/2}=-\frac{1}{8}-3q^{1/2}-3q+\cdots, \\
\varepsilon=\varepsilon_{2}(\tau)&=\frac{1}{16} \theta_1^4 \theta_3^4=\sum\limits_{n=1}^{\infty}\sum\limits_{\substack{d|n \\ n/d~{\rm odd}}} d^3 q^{n/2}=q^{1/2}+8q+\cdots,
\end{split}
\end{equation}
the equation (\ref{odefeq}) has the solution
\[
f_2(z, \tau)=2\pi\sqrt{-1}\frac{\theta(z, \tau)}{\theta^\prime (0,\tau)} \frac{\theta_2(0,\tau)}{\theta_2(z, \tau)}.
\]

Let $M$ be a $4k$-dimensional closed smooth oriented manifold. Let $\{\pm 2\pi \sqrt{-1} x_i, \ 1\leq i\leq 2k\}$ be the formal Chern roots of the complexification $T_\mathbb{C}M= TM\otimes \mathbb{C}$. Consider the two characteristic numbers
\begin{equation}\label{ell12Meq}
\begin{split}
Ell_1(M ,\tau)&=2^{2k}\Big\langle \prod_{i=1}^{2k}\frac{2\pi \sqrt{-1}x_i}{f_1(x_i, \tau)}, [M]\Big\rangle \in \mathbb{Q}[[q]],\\
Ell_2(M ,\tau)&=\Big\langle \prod_{i=1}^{2k}\frac{2\pi \sqrt{-1}x_i}{f_2(x_i, \tau)}, [M]\Big\rangle \in \mathbb{Q}[[q^{1/2}]].
\end{split}
\end{equation}

$Ell_1(M ,\tau), Ell_2(M ,\tau)$ can be written as signature and $\widehat{A}$-genus twisted by the Witten bundles. More precisely,  
let 
\be\widehat{A}(M)=\prod_{i=1}^{2k}\frac{\pi \sqrt{-1}x_i}{\sinh \pi\sqrt{-1}x_i} \ee
be the $\widehat{A}$-class and 
\be\widehat{L}(M)=\prod_{i=1}^{2k}\frac{2\pi \sqrt{-1}x_i}{\tanh \pi\sqrt{-1}x_i} \ee
the $\widehat{L}$-class. Let $E$ be a complex vector bundle on $M$. $\Big\langle\widehat{L}(M)\mathrm{ch}E, [M]\Big\rangle$ is equal to the index of the twisted signature operator $\mathrm{ind}(d_s\otimes E)={\rm Sig}(M, E).$ When $M$ is spin, $\Big\langle\widehat{A}(M)\mathrm{ch}E, [M]\Big\rangle$ is equal to the index of the twisted Atiyah-Singer Dirac operator $\mathrm{ind}(D\otimes E)$.
When twisted by bundles naturally constructed from the tangent bundle $TM$ of $M$, denote
\[
\begin{split}
\widehat{A}(M, T^{i}\otimes \Lambda^{j}\otimes S^{k}) &:=\widehat{A}(M, \otimes^{i}T_{\mathbb{C}}M\otimes \Lambda^j(T_{\mathbb{C}}M)\otimes S^k(T_{\mathbb{C}}M)),\\
{\rm Sig}(M, T^{i}\otimes \Lambda^{j}\otimes S^{k}) &:={\rm Sig}(M, \otimes^{i}T_{\mathbb{C}}M\otimes \Lambda^j(T_{\mathbb{C}}M)\otimes S^k(T_{\mathbb{C}}M)),
\end{split}
\]
where $\Lambda^j(T_{\mathbb{C}}M)$ and $S^k(T_{\mathbb{C}}M)$ are the $j$-th exterior and $k$-th symmetric powers of $T_\mathbb{C}M$ respectively.

For any complex variable $t$, let
$$\Lambda_t(E)=\mathbb{C}+tE+t^2\Lambda^2(E)+\cdots ,
\ \  S_t(E)=\mathbb{C}+tE+t^2S^2(E)+\cdots$$  denote respectively
the total exterior and symmetric powers  of $E$, which live in
$K(M)[[t]]$. Denote by
\be \label{W1}\Theta_1(T_\CC M)=\bigotimes_{n=1}^\infty S_{q^n}(T_\CC M-\CC^{4k})
\otimes \bigotimes_{m=1}^\infty \Lambda_{q^m}(T_\CC M-\CC^{4k}), \ee
\be\label{W2}\Theta_2(T_\CC M)=\bigotimes_{n=1}^\infty
S_{q^n}(T_\CC M-\CC^{4k}) \otimes \bigotimes_{m=1}^\infty
\Lambda_{-q^{m-{1\over 2}}}(T_\CC M-\CC^{4k})\ee the Witten bundles, which are
elements in $K(M)[[q^{1\over2}]]$. Then one has
\be \label{expression}
\begin{split}
Ell_1(M ,\tau)&=\Big\langle\widehat{L}(M)\mathrm{ch}(\Theta_1(T_\CC M)), [M]\Big\rangle,\\
Ell_2(M ,\tau)&=\Big\langle \widehat{A}(M)\mathrm{ch}(\Theta_2(T_\CC M)), [M]\Big\rangle.
\end{split}
\ee

\section{Proof of Theorem \ref{ellipticinjthm}}
 \label{sec: pfthm}

Let $M$ be a $4k$-dimensional closed smooth oriented manifold. By (\ref{oriented}), 
\be \label{a03defeq}
\phi(M)=a_0(M)\delta^6+a_1(M)\delta^4\varepsilon+a_2(M)\delta^2\varepsilon^2+a_3(M)\varepsilon^3,\ee
where $a_i(M)\in \mathbb{Z}\big[\frac{1}{2}\big] $, $0\leq i \leq 3$. First we show that one can express the $4$ Pontryagin numbers $a_i(M)$ ($0\leq i \leq 3$) in terms of $\widehat{A}$-genus, signature and their twists by the tangent bundle. 
\begin{proposition}\label{soai=ahatlemma} Let $M$ be a $4k$-dimensional closed smooth oriented manifold. One has
\[
\begin{split}
&a_0(M)=2^{18}\widehat{A}(M), \\ 
&a_1(M)=-2^{15}\cdot 3\cdot 5\widehat{A}(M)-2^{12}\widehat{A}(M, T), \\ 
&a_2(M)=2^{16}\cdot 3\widehat{A}(M)+2^{13}\widehat{A}(M, T)+\frac{1}{2^5}{\rm Sig}(M, T), \\
&a_3(M)=2^{15}\widehat{A}(M)-2^{12}\widehat{A}(M, T)-\frac{1}{2^5}{\rm Sig}(M, T)+ {\rm Sig}(M).
\end{split}
\]
\end{proposition}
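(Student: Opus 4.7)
The plan is to obtain four linear equations in the four unknowns $a_0, a_1, a_2, a_3$ by comparing, for each of the two concrete elliptic genera $Ell_1$ and $Ell_2$, the Witten bundle expansion (\ref{expression}) with the direct substitution $\phi(M)\big|_{\delta=\delta_j,\,\varepsilon=\varepsilon_j}$. Since $\phi(M)$ at dimension $24$ has exactly the four monomials $\delta^6, \delta^4\varepsilon, \delta^2\varepsilon^2, \varepsilon^3$, four such equations will suffice to pin down all the $a_i$.

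The first ingredient is the matching between the two representations. Because every monomial in (\ref{a03defeq}) has weight $24$ with $|\delta|=4, |\varepsilon|=8$, one has $\phi(M)\big|_{\delta=c\delta_0,\,\varepsilon=c^2\varepsilon_0} = c^6\,\phi(M)\big|_{\delta=\delta_0,\,\varepsilon=\varepsilon_0}$. Combined with the explicit $2^{2k}=2^{12}$ prefactor in the definition (\ref{ell12Meq}) of $Ell_1$, this yields the identifications
\[
Ell_1(M,\tau) = 2^{12}\,\phi(M)\big|_{\delta=\delta_1(\tau),\,\varepsilon=\varepsilon_1(\tau)}, \qquad Ell_2(M,\tau) = \phi(M)\big|_{\delta=\delta_2(\tau),\,\varepsilon=\varepsilon_2(\tau)},
\]
which are consistent with the cusp evaluations $\phi|_{\delta=1,\,\varepsilon=1} = {\rm Sig}(M)$ and $\phi|_{\delta=-1/8,\,\varepsilon=0} = \widehat{A}(M)$ recorded in Section \ref{sec: ell}.

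I would then read off four equations from the leading two $q$-coefficients of each $Ell_j$. From $Ell_2|_{q=0} = \widehat{A}(M)$ and $\phi|_{\delta=-1/8,\,\varepsilon=0} = a_0/2^{18}$ I get $a_0 = 2^{18}\widehat{A}(M)$. From the expansion $\Theta_2 = 1 - q^{1/2}(T_\CC M - \CC^{24}) + O(q)$, the $q^{1/2}$-coefficient of $Ell_2$ equals $24\widehat{A}(M) - \widehat{A}(M,T)$; matching this against the $q^{1/2}$-coefficient of $\phi|_{\delta_2,\varepsilon_2}$, computed from $\delta_2 = -\tfrac{1}{8}(1+24q^{1/2}) + O(q)$ and $\varepsilon_2 = q^{1/2} + O(q)$, gives a linear equation that pins down $a_1$. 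From $Ell_1|_{q=0} = {\rm Sig}(M)$ I obtain $a_0+a_1+a_2+a_3 = {\rm Sig}(M)$. Finally, from $\Theta_1 = 1 + 2q(T_\CC M - \CC^{24}) + O(q^2)$, the $q$-coefficient of $Ell_1$ equals $2{\rm Sig}(M,T) - 48{\rm Sig}(M)$; matching against the $q$-coefficient of $2^{12}\phi|_{\delta_1,\varepsilon_1}$, computed from $\delta_1 = \tfrac{1}{4}(1+24q) + O(q^2)$ and $\varepsilon_1 = \tfrac{1}{16}(1-16q) + O(q^2)$, produces a fourth linear equation. Together with the third equation and the already-known $a_0, a_1$, this gives a $2\times 2$ system for $a_2, a_3$ whose unique solution is the one claimed.

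The main obstacle is the careful bookkeeping across three layers of expansions: the theta function $q$-series for $\delta_j(\tau), \varepsilon_j(\tau)$ in (\ref{d1e1eq})--(\ref{d2e2eq}), the low-order Witten bundle expansions of $\Theta_1$ and $\Theta_2$, and the $2^{12}$ prefactor in the $Ell_1$ identification (which is crucial for reproducing the signature at the cusp rather than $2^{-12}{\rm Sig}(M)$). Once all three are correctly in hand, the remainder of the argument is elementary linear algebra and produces exactly the four formulas in the statement.
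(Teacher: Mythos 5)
Your proposal is correct and follows essentially the same route as the paper: identify $Ell_1=2^{12}\phi|_{\delta_1,\varepsilon_1}$ and $Ell_2=\phi|_{\delta_2,\varepsilon_2}$, extract the constant and first-order $q$-coefficients of each via the low-order expansions of $\Theta_1$, $\Theta_2$ and of $\delta_j,\varepsilon_j$, and solve the resulting $4\times 4$ linear system for the $a_i(M)$. The expansions and the four equations you write down match those in the paper's proof, and the linear algebra indeed yields the stated formulas.
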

\begin{proof} From the preliminary in Section \ref{sec: ell}, we see that
\begin{equation}\label{ell24termeq}
Ell_1(M)=2^{12}\big(a_0(M)\delta_1^6+a_1(M)\delta_1^4 \varepsilon_1+ a_2(M)\delta_1^2\varepsilon_1^2+ a_3(M)\varepsilon_1^3\big).
\end{equation}
and 
\begin{equation}\label{ell14termeq}
Ell_2(M)=a_0(M)\delta_2^6+a_1(M)\delta_2^4 \varepsilon_2+ a_2(M)\delta_2^2\varepsilon_2^2+ a_3(M)\varepsilon_2^3.
\end{equation}
On the other hand, by (\ref{W1}), (\ref{W2}) and (\ref{expression}), it is not hard to compute that
\begin{equation}\label{ellexpeq}
\begin{split}
Ell_1(M,\tau)&={\rm Sig}(M)+ \big(2{\rm Sig}(M, T)-48{\rm Sig}(M)\big)q+\cdots,\\
Ell_2(M,\tau)&=\widehat{A}(M)- \big(\widehat{A}(M, T)-24\widehat{A}(M)\big)q^{1/2}+\cdots .
\end{split}
\end{equation}
With the help of (\ref{d1e1eq}) and (\ref{d2e2eq}), we can compare (\ref{ellexpeq}) with (\ref{ell14termeq}) and (\ref{ell24termeq}). For instance, by modulo higher terms $(q^{\frac{1}{2}})^{i}$ with $i\geq 2$,
\[
\begin{split}
Ell_2(M)
&\equiv \frac{a_0(M)}{8^6}(-1-24q^{1/2})^6+\frac{a_1(M)}{8^4}(-1-24q^{1/2})^4 (q^{1/2})\\
&\equiv \frac{a_0(M)}{2^{18}}+( \frac{3^2 a_0(M)}{2^{14}}+\frac{a_1(M)}{2^{12}}) q^{1/2}.
\end{split}
\]
Combining the above formula with (\ref{ellexpeq}), we have
\begin{equation}\label{4t=noeq2}
\frac{a_0(M)}{2^{18}}=\widehat{A}(M), \ \ \frac{3^2 a_0(M)}{2^{14}}+\frac{a_1(M)}{2^{12}}=-\widehat{A}(M, T)+24\widehat{A}(M).
\end{equation}
Similarly, by modulo higher terms $q^{i}$ with $i\geq 2$,
\[
\begin{split}
Ell_1(M)
&\equiv 2^{12}\big( \frac{a_0(M)}{8^6}(2+48q)^6+\frac{a_1(M)}{8^4}(2+48q)^4 (\frac{1}{16}-q) \\
&\ \ \ +\frac{a_2(M)}{8^2}(2+48q)^2 (\frac{1}{16}-q)^2 +a_3(M)(\frac{1}{16}-q)^3\big)\\
&\equiv (a_0(M)+ a_1(M)+a_2(M)+a_3(M))\\
&\ \ \ +(144 a_0(M)+80a_1(M)+16a_2(M)-48a_3(M)) q.
\end{split}
\]
Combining the above formula with (\ref{ellexpeq}), we have
\begin{equation}\label{4t=noeq1}
\begin{split}
&a_0(M)+ a_1(M)+a_2(M)+a_3(M)={\rm Sig}(M), \\ 
&144 a_0(M)+80a_1(M)+16a_2(M)-48a_3(M)=2{\rm Sig}(M, T)-48{\rm Sig}(M).
\end{split}
\end{equation}
The equalities in (\ref{4t=noeq2}) and (\ref{4t=noeq1}) can be organized to result in a matrix equation
\begin{equation}\label{mxzi=no1eq}
\renewcommand\arraystretch{1.3}
\begin{pmatrix}
\frac{1}{2^{18}} & 0 & 0 &0 \\
\frac{3^2}{2^{14}} &\frac{1}{2^{12}}& 0 &0 \\
1& 1& 1 & 1\\
144 &80& 16 &-48\\
\end{pmatrix}
\cdot 
\begin{pmatrix}
a_0(M)\\   a_1(M) \\  a_2(M) \\  a_3(M) 
\end{pmatrix}
=
\begin{pmatrix}
\widehat{A}(M) \\  -\widehat{A}(M, T)+24\widehat{A}(M) \\ {\rm Sig}(M) \\  2{\rm Sig}(M, T)-48{\rm Sig}(M)
\end{pmatrix}.
\end{equation}
We can solve $a_i(M)$ from (\ref{mxzi=no1eq}), and then the proposition is proved.
\end{proof}

Now suppose $M$ is further a string manifold. With the string condition, we can rewrite the equalities of $a_i(M)$ in Proposition \ref{soai=ahatlemma} in terms of a new family of (twisted) genera, which is helpful for proving Theorem \ref{ellipticinjthm}.
\begin{proposition}\label{stringai=ahatprop} Let $M$ be a $24$-dimensional closed smooth string manifold. Then 
\begin{equation}\label{a=mxnoeq}
 \renewcommand\arraystretch{1.3}
\begin{pmatrix}
 a_0(M) \\   a_1(M) \\  a_2(M) \\  a_3(M) 
\end{pmatrix}
=
\begin{pmatrix}
2^{18} & 0 & 0 & 0\\
-2^{15}\cdot 3\cdot 5& -2^{15}\cdot 3 &0 & 0\\
2^{8}\cdot 3\cdot 331 & 2^{9}\cdot 3^5 & 2^{6} & 0 \\
-2^{8} \cdot 97 & -2^{9}\cdot 3\cdot 17 & -2^{6} &2^{3}
\end{pmatrix}
\cdot 
\begin{pmatrix}
\widehat{A}(M)\\ \frac{1}{24}\widehat{A}(M, T) \\ \widehat{A}(M,\Lambda^2) \\ \frac{1}{8}{\rm Sig}(M)
\end{pmatrix}.
\end{equation} 
\end{proposition}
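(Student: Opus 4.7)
The plan is to refine the proof of Proposition \ref{soai=ahatlemma} by pushing the $q^{1/2}$-expansion of $Ell_2(M,\tau)$ one order higher, to the $q^1$ term. This will bring $\widehat{A}(M,\Lambda^2)$ into the picture and, combined with the $q^{0}$ and $q^{1/2}$ equations already used in Proposition \ref{soai=ahatlemma}, will eliminate $\mathrm{Sig}(M,T)$ from the formulas for the $a_i(M)$.

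First I would directly expand $\mathrm{ch}(\Theta_2(T_\mathbb{C} M))$ through order $q^1$ from (\ref{W2}). Writing $\widetilde T = T_\mathbb{C} M - \mathbb{C}^{24}$, the only factors contributing at this order are $\Lambda_{-q^{1/2}}(\widetilde T)$ and $S_q(\widetilde T)$. Using the virtual-bundle identities $\Lambda_{-t}(V-W) = \Lambda_{-t}(V)\cdot S_t(W)$ and $S_t(V-W) = S_t(V)\cdot \Lambda_{-t}(W)$, together with $S_{q^{1/2}}(\mathbb{C}^{24}) = (1-q^{1/2})^{-24} = 1 + 24q^{1/2} + 300q + \cdots$ and $\Lambda_{-q}(\mathbb{C}^{24}) = (1-q)^{24}$, the $q^1$ coefficient of $\mathrm{ch}(\Theta_2)$ works out to $276 - 23\,\mathrm{ch}(T_\mathbb{C} M) + \mathrm{ch}(\Lambda^2 T_\mathbb{C} M)$. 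Pairing with $\widehat{A}(M)$ against $[M]$ then yields
\[
Ell_2(M,\tau)=\widehat{A}(M) + (24\widehat{A}(M) - \widehat{A}(M,T))\,q^{1/2} + (276\widehat{A}(M) - 23\widehat{A}(M,T) + \widehat{A}(M,\Lambda^2))\,q + O(q^{3/2}).
\]

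On the other side, I would expand $\sum_{i=0}^{3} a_i(M)\,\delta_2^{6-i}\varepsilon_2^i$ through order $q^1$ via (\ref{d2e2eq}); since $\varepsilon_2 = q^{1/2} + 8q + \cdots$, the term $a_3\varepsilon_2^3$ is $O(q^{3/2})$ and drops out, so the right-hand side depends only on $a_0, a_1, a_2$. Matching the $q^0, q^{1/2}, q^1$ coefficients of the two expansions gives three linear equations for $a_0, a_1, a_2$ in terms of $\widehat{A}(M), \widehat{A}(M,T), \widehat{A}(M,\Lambda^2)$; solving them (and rewriting factors such as $2^{12} = 2^{15}\cdot 3/24$ and $64\cdot 81 = 2^9\cdot 3^5/24$) produces the first three rows of (\ref{a=mxnoeq}). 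For the fourth row I would reuse the identity $a_0 + a_1 + a_2 + a_3 = \mathrm{Sig}(M)$, obtained in the proof of Proposition \ref{soai=ahatlemma} by evaluating $Ell_1$ at $q=0$, so that $a_3 = \mathrm{Sig}(M) - a_0 - a_1 - a_2$; substituting the just-derived formulas for $a_0, a_1, a_2$ recovers the last row, and in particular the vanishing $\widehat{A}$-coefficient and $\widehat{A}(M,T)$-coefficient modifications combine to give $-2^{8}\cdot 97$ and $-2^{9}\cdot 3\cdot 17$ as predicted.

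The main obstacle will be the combinatorial bookkeeping in the $q^1$ expansion of $\mathrm{ch}(\Theta_2)$, in particular correctly handling the contribution of the trivial bundle $\mathbb{C}^{24}$ through the factor $(1-q^{1/2})^{-24}$ and keeping the K-theoretic identities for $\Lambda^*$, $S^*$ on virtual bundles straight; once this computation is organized, the remaining steps are routine linear algebra and arithmetic reduction to match the integer entries of the matrix.
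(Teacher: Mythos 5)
Your proposal is correct, but it proves the proposition by a genuinely different route than the paper. The paper's own proof is essentially one line: it quotes the Chen--Han identity (\ref{ch15eq}), ${\rm Sig}(M,T)=2^{11}\bigl(\widehat{A}(M,\Lambda^2)-47\widehat{A}(M,T)+900\widehat{A}(M)\bigr)$ for $24$-dimensional string manifolds, and substitutes it into Proposition \ref{soai=ahatlemma} to eliminate ${\rm Sig}(M,T)$. You instead push the $q$-expansion of $Ell_2$ one order further: your coefficient $276-23\,\mathrm{ch}(T_{\mathbb{C}}M)+\mathrm{ch}(\Lambda^2 T_{\mathbb{C}}M)$ for the $q^1$-term of $\mathrm{ch}(\Theta_2)$ is right (the constant checks out, e.g.\ the virtual rank $276-23\cdot 24+\binom{24}{2}=0$), and with $\delta_2^6=2^{-18}(1+144q^{1/2}+8784q+\cdots)$, $\delta_2^4\varepsilon_2=2^{-12}(q^{1/2}+104q+\cdots)$, $\delta_2^2\varepsilon_2^2=2^{-6}(q+\cdots)$ the three matchings give $a_0=2^{18}\widehat{A}(M)$, $a_1=-2^{15}\cdot 3\cdot 5\,\widehat{A}(M)-2^{12}\widehat{A}(M,T)$, $a_2=2^8\cdot 3\cdot 331\,\widehat{A}(M)+2^6\cdot 3^4\,\widehat{A}(M,T)+2^6\widehat{A}(M,\Lambda^2)$, and then $a_3={\rm Sig}(M)-a_0-a_1-a_2$ reproduces the fourth row ($-2^8\cdot 97$, $-2^9\cdot 3\cdot 17/24$, $-2^6$, $1$) exactly, so the arithmetic lands on the stated matrix. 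What the two approaches buy: the paper's argument is shorter and leans on a published modular-form identity, while yours is self-contained (only the paper's Section 2 formulas (\ref{d2e2eq}), (\ref{W2}), (\ref{expression}) and the $q^0$-coefficient of $Ell_1$ are used) at the price of more combinatorial bookkeeping in $\mathrm{ch}(\Theta_2)$. A noteworthy byproduct you should flag: your derivation never uses the string hypothesis, so it proves the matrix identity for arbitrary closed oriented $24$-manifolds, and combined with Proposition \ref{soai=ahatlemma} it actually re-derives the relation (\ref{ch15eq}) from level-$2$ elliptic genus data alone, without the Witten-genus modularity input of \cite{CH15}; this is stronger than what the proposition asserts, not a gap, but it deserves an explicit remark (and a careful recheck of the $q^1$ bookkeeping, since it bypasses the cited result).
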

\begin{proof}
Under the string condition, the twisted and untwisted genera in Proposition \ref{soai=ahatlemma} possess intrinsic relations. Indeed, by combining modularity of the Witten genus and a modular form constructed by Liu and Wang in \cite{LW13}, Chen and Han \cite{CH15} showed that, when $M$ is a $24$-dimensional closed smooth string manifold, one has
\begin{equation}\label{ch15eq} 
{\rm Sig}(M, T)= 2^{11}\big(\widehat{A}(M, \Lambda^2)-47 \widehat{A}(M, T)+900 \widehat{A}(M)\big).
\end{equation}
With (\ref{ch15eq}) we can rewrite the equalities of $a_i(M)$ in Proposition \ref{soai=ahatlemma} as displayed in this proposition.
\end{proof}

In \cite{HH21} we have determined an integral basis of $\Omega_{24}^{String}$, which is crucial for the proof of Theorem \ref{ellipticinjthm}.
 \begin{theorem}[Theorem 1 and Corollary 3 in \cite{HH21}]\label{basisthm}
The correspondence 
$\kappa :\Omega
_{24}^{String}\rightarrow $ $\mathbb{Z}\oplus \mathbb{Z}\oplus \mathbb{Z}
\oplus \mathbb{Z}$ defined by
\[
\kappa (M)=(\widehat{A}(M),\frac{1}{24}\widehat{A}(M,T),
\widehat{A}(M,\Lambda ^{2}),\frac{1}{8}{\rm Sig}(M))
\]
is an isomorphism of abelian groups. Moreover, there exists a basis $\{M_{i}\}_{1\leq i\leq 4}$ of $\Omega
_{24}^{String}$ such that
\[
\hspace{1.35cm}
 \renewcommand\arraystretch{1.3}
K:=
\begin{pmatrix}
\kappa (M_{1})\\
\kappa (M_{2})\\
\kappa (M_{3})\\
\kappa (M_{4})
\end{pmatrix}^\tau
=
\begin{pmatrix}
0 & 1  & 0 & 0\\
-1 & 0 &0 & 0\\
2^3\cdot 3^3\cdot 5  & 2^2\cdot 3\cdot 17 \cdot 1069 & -1 & 0\\
2^8\cdot 3\cdot 61   & 2^8\cdot 5\cdot 37 & 2^2\cdot 7 & 1
\end{pmatrix}.\hspace{1.35cm}\Box
\]
\end{theorem}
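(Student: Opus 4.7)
The plan is as follows. First, I would verify that each of the four components of $\kappa$ takes integer values on any $24$-dimensional closed string manifold. Integrality of $\widehat{A}(M)$ and of $\widehat{A}(M,\Lambda^2)$ is immediate from the Atiyah--Singer index theorem applied to the Dirac operator (twisted by $\Lambda^2 T_{\mathbb{C}}M$). The divisibility $\widehat{A}(M,T)\equiv 0\pmod{24}$ is more subtle: using the identity $W(M)=\widehat{A}(M)\bar{\Delta}+\widehat{A}(M,T)\Delta$ recalled in the introduction, together with the Ando--Hopkins--Strickland $\sigma$-orientation (which lifts $W$ to $\pi_{24} TMF$ and forces the $\Delta$-coefficient to be divisible by $24$), one obtains the required congruence. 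Finally, $\mathrm{Sig}(M)\equiv 0\pmod 8$ can be extracted from the Ochanine refinement of the signature on spin cobordism combined with the vanishing $p_1/2=0$, or directly from a modular argument applied to $Ell_1(M,\tau)$.

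Second, I would establish that $\Omega_{24}^{String}$ is a free abelian group of rank $4$. Rationally the rank equals $4$, visible from $H^*(BString;\mathbb{Q})=\mathbb{Q}[p_2,p_3,p_4,\ldots]$, whose degree-$24$ part has basis $\{p_2^3,\, p_2p_4,\, p_3^2,\, p_6\}$. The torsion-freeness in this specific degree follows from the Mahowald--Hopkins analysis \cite{MH02}: their computation of $\pi_{24}tmf$ and of the image of the $\sigma$-orientation, combined with the Adams spectral sequence in low degrees, identifies $\pi_{24}$ of the string cobordism spectrum with $\mathbb{Z}^4$.

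Third, having reduced the claim to showing that a homomorphism $\mathbb{Z}^4\to\mathbb{Z}^4$ is an isomorphism, I would exhibit four explicit string manifolds realizing the stated columns of $K$. Two of them, $M_3$ and $M_4$, can be extracted from \cite{MH02}, which furnishes string manifolds in degree $24$ detecting the image of the $\sigma$-orientation and whose $\widehat{A}$, $\widehat{A}(\cdot,T)$, $\widehat{A}(\cdot,\Lambda^2)$, and $\mathrm{Sig}$ are accessible from the construction. The remaining two, $M_1$ and $M_2$, must be produced inside the kernel of the Witten genus, for instance as carefully chosen fiber bundles to which Witten--Bott--Taubes--Liu rigidity applies, or as products and projective bundles with controlled Chern roots; their characteristic numbers are then computed via the splitting principle.

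The main obstacle is the construction of $M_1$ and $M_2$ with the precise twisted $\widehat{A}$-invariants required to make the four vectors $\kappa(M_i)$ assemble into a matrix which, after a suitable reordering of the basis, is triangular with diagonal entries $\pm 1$. Once this is accomplished, $|\det K|=1$ shows simultaneously that $\kappa$ is surjective and that the $\{M_i\}$ are $\mathbb{Z}$-linearly independent; combined with the torsion-freeness and rank computation of step two, this yields that $\kappa$ is an isomorphism of abelian groups and that $\{M_i\}_{1\le i\le 4}$ is an integral basis of $\Omega_{24}^{String}$.
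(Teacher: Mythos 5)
The statement you are trying to prove is quoted verbatim from the authors' earlier paper \cite{HH21} (Theorem~1 and Corollary~3 there); the present paper gives no proof of it, only the citation, as the terminal $\Box$ indicates. So there is no in-paper proof to compare against. Your outline does capture the expected skeleton of such a proof---establish integrality of all four entries of $\kappa$, show $\Omega_{24}^{String}\cong\mathbb{Z}^4$ via low-degree Adams spectral sequence computations (Hovey--Ravenel, Mahowald--Hopkins), and then exhibit four explicit string $24$-manifolds whose $\kappa$-values form a unimodular matrix---so the overall strategy is on target.

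There are, however, concrete gaps in the blind sketch. The divisibility $\mathrm{Sig}(M)\equiv 0\pmod 8$ for a closed string $24$-manifold does not follow from Ochanine's signature theorem, which concerns spin manifolds of dimension $8k+4$; in dimension $\equiv 0\pmod 8$ the signature of a spin manifold is unrestricted (witness $\mathbb{HP}^2$ or $\mathbb{OP}^2$), and even the string condition $p_1/2=0$ does not give this directly. That divisibility is itself one of the nontrivial outputs of \cite{HH21}, obtained by modularity arguments in the same circle of ideas as (\ref{ch15eq}); your second alternative (``a modular argument applied to $Ell_1$'') points in the right direction but is not a proof. You also have the roles of the generators reversed: reading off the columns of $K$, it is $M_1$ and $M_2$ (with $\widehat{A}(M,T)\neq 0$, resp.\ $\widehat{A}(M)\neq 0$) that realize the image of the Witten genus and come from Mahowald--Hopkins \cite{MH02}, while $M_3$ and $M_4$ satisfy $\widehat{A}=\widehat{A}(\cdot,T)=0$, hence lie in the kernel of the Witten genus and are the explicitly constructed manifolds of \cite{HH21}---the introduction of the present paper says exactly this. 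Finally, the genuinely hard content of the cited theorem is not the abstract determinant argument (which, given the matrix, is a one-line check that $|\det K|=1$) but the explicit construction of $M_3,M_4$ and the exact evaluation of $\widehat{A}(M_i,\Lambda^2)$ and $\mathrm{Sig}(M_i)$ to the stated integers; your proposal only gestures at this and offers no route to those particular numbers.
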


Now we are ready to prove Theorem \ref{ellipticinjthm}.
\begin{proof}[Proof of Theorem \ref{ellipticinjthm}]
Suppose $M$ satisfies that $\phi(M)=0$. Then by (\ref{a03defeq}) $a_i(M)=0$ for $0\leq i\leq 3$. Notice that in (\ref{a=mxnoeq}) the coefficient matrix is invertible.
Then by Proposition \ref{stringai=ahatprop}, the $4$ index numbers $\widehat{A}(M)$, $\frac{1}{24}\widehat{A}(M, T)$, $\widehat{A}(M,\Lambda^2)$ and $\frac{1}{8}{\rm Sig}(M)$ vanish.
This means that $\kappa(M)=(0,0,0,0)$. Since by Theorem \ref{basisthm} $\kappa$ is an isomorphism, $[M]=0\in \Omega_{24}^{String}$. Hence $\phi$ is injective.

To compute the image of the elliptic genus, we need to compute the elliptic genus of the generators $M_i$ ($1\leq i\leq 4$) in Theorem \ref{basisthm}. By Proposition \ref{stringai=ahatprop} and Theorem \ref{basisthm}, they can be computed by matrix multiplication
\[
\begin{split}
\renewcommand\arraystretch{1.3}
& \begin{pmatrix}
2^{18} & 0 & 0 & 0\\
-2^{15}\cdot 3\cdot 5& -2^{15}\cdot 3 &0 & 0\\
2^{8}\cdot 3\cdot 331 & 2^{9}\cdot 3^5 & 2^{6} & 0 \\
-2^{8} \cdot 97 & -2^{9}\cdot 3\cdot 17 & -2^{6} &2^{3}
\end{pmatrix}
\cdot 
\begin{pmatrix}
0 & 1  & 0 & 0\\
-1 & 0 &0 & 0\\
2^3\cdot 3^3\cdot 5  & 2^2\cdot 3\cdot 17 \cdot 1069 & -1 & 0\\
2^8\cdot 3\cdot 61   & 2^8\cdot 5\cdot 37 & 2^2\cdot 7 & 1
\end{pmatrix}
\\
& =
\begin{pmatrix}
0 & 2^{18}  & 0 & 0\\
2^{15}\cdot 3 & -2^{15}\cdot 3\cdot 5 &0 & 0\\
-2^{11}\cdot 3^3 & 2^{11}\cdot 3^3\cdot 257& -2^6 & 0\\
2^{12}\cdot 3^4   & -2^{12}\cdot 3^4\cdot 41 & 2^5\cdot 3^2 & 2^3
\end{pmatrix}
= (a_j(M_i))_{j\times i}.
\end{split}
\]
Combining (\ref{a03defeq}), the above matrix gives the $4$ generators of the image $\phi(\Omega_\ast^{String})$ as 
\[
\begin{split}
 2^3 \cdot \varepsilon^3=\phi(M_4),& \\
- (8\delta)^2\varepsilon^2+ 2^5\cdot 3^2 \cdot \varepsilon^3=\phi(M_3), &\\
2^{3}\cdot 3 \cdot (8\delta)^4 \varepsilon -2^{5}\cdot 3^3 \cdot (8\delta)^2\varepsilon^2 +2^{12}\cdot 3^4  \cdot  \varepsilon^3=\phi(M_1),&\\
 (8\delta)^6-2^{3}\cdot 3\cdot 5 \cdot (8\delta)^4 \varepsilon +2^{5}\cdot 3^3\cdot 257 \cdot (8\delta)^2\varepsilon^2-2^{12}\cdot 3^4\cdot 41\cdot  \varepsilon^3=\phi(M_2).&
\end{split}
\]
It follows that $\phi(\Omega_\ast^{String})$ is generated by $2^3 \cdot\varepsilon^3$, $(8\delta)^2\varepsilon^2$,
$2^{3}\cdot 3 \cdot(8\delta)^4 \varepsilon$ and $(8\delta)^6$.
This completes the proof of the theorem.
\end{proof}
 


\begin{thebibliography}{10}



\bibitem{ABP67} D. W. Anderson, E. H. Brown and F. P. Peterson, {\em The structure of the Spin cobordism ring}, Ann. of Math. (2) \textbf{86} (1967), 271-298.

\bibitem{AHS01} M. Ando, M. J. Hopkins and N. P. Strickland, {\em Elliptic spectra, the Witten genus and the theorem of the cube}, Invent. Math. \textbf{146} (2001), no. 3, 595-687.

\bibitem{BT} R. Bott, C. Taubes, {\em On the rigidity theorems of Witten},
J. Amer. Math. Soc. \textbf{2} (1989), 137-186.

\bibitem {Cha85} K. Chandrasekharan, Elliptic Functions. Springer-Verlag, 1985.

\bibitem{CH15} Q. Chen and F. Han, {\em Mod $3$ congruence and twisted signature of $24$ dimensional string manifolds}, Tran. Amer. Math. Soc. \textbf{367} (4) (2015), 2959-2977.

\bibitem{CH20} X. Chen and F. Han, {\em A vanishing theorem for elliptic genera under a Ricci curvature bound}, preprint, 2020. arXiv:2003.09897 

\bibitem{CCLOS} D.V. Chudnovsky, G.V. Chudnovsky, P.S. Landweber, S. Ochanine and R.E. Stong, {\em Integrality and divisibility of the elliptic genus}, Preprint (1988).

\bibitem{DF16} J. Davis and F. Fang, {\em An almost flat manifold with a cyclic or quaternionic holonomy group bounds},
J. Differential Geom. \textbf{103} (2016), no. 2, 289-296. 

\bibitem{FZ83} F. T. Farrell and S. Zdravkovska, {\em Do almost flat manifolds bound?}, Michigan Math. J. \textbf{30} (1983), 199-208.

\bibitem{Gro} M. Gromov, {\em Almost flat manifolds}, J. Differential Geom. \textbf{13} (1978), 231-241.

\bibitem{HH21} F. Han and R. Huang, {\em On characteristic numbers of $24$ dimensional String manifolds}, Math. Z. (2021). https://doi.org/10.1007/s00209-021-02877-6

\bibitem{HBJ94} F. Hirzebruch, T. Berger and R. Jung, Manifolds and modular forms, Aspects of Mathematics, second edition (Friedrich Vieweg and Sohn, Braunschweig, 1994); with appendices by N.-P. Skoruppa and P. Baum.

\bibitem{Hop02} M. Hopkins, {\em Algebraic Topology and Modular Forms}, Plenary talk, ICM, Beijing, 2002.

\bibitem{KS93} M. Kreck and S. Stolz, {\em $\mathbb{H}P^2$-bundles and elliptic homology}, Acta. Math. \textbf{171} (1993), 231-261.


\bibitem{Lan88-1} P. S. Landweber, {\em Elliptic genera: an introductory overview}, in ``Elliptic Curves and Modular Forms in Algebraic Topology'' (P. S. Landweber, editor), Lecture Notes in Mathematics \textbf{1326}, Springer, Berlin (1988), 1-10.


\bibitem{Lan88} P. S. Landweber, {\em Elliptic cohomology and modular forms}, in ``Elliptic Curves and Modular Forms in Algebraic Topology'' (P. S. Landweber, editor), Lecture Notes in Mathematics \textbf{1326}, Springer, Berlin (1988), 55-68.

\bibitem{LS88} P. S. Landweber and R. E. Stong,
{\em Circle actions on Spin manifolds and characteristic numbers},
Topology \textbf{27} (1988), no. 2, 145-161.



\bibitem{Liu92} K. Liu, {\em On Mod 2 and Higher Elliptic Genera}, Comm. Math. Phys. \textbf{149} (1992), 71-95.

\bibitem{Liu95} K. Liu, {\em Modular invariance and characteristic numbers}, Comm. Math. Phys. \textbf{174} (1995), 29-42.

\bibitem{Liu95JDG} K. Liu, {\em On modular invariance and rigidity theorems}, J. Differential Geom. Volume \textbf{41}, Number 2 (1995), 343-396.

\bibitem{Liu96TOP} K. Liu, {\em On ellitic genera and theta-functions}, 
Topology, 35 (1996), 617-640.

\bibitem {Liu96} K. Liu, {\em Modular forms and topology}, Moonshine, the Monster, and related
topics (South Hadley, MA, 1994), 237-262, Contemp. Math., 193,
Amer. Math. Soc., Providence, RI, 1996.

\bibitem{LW13} K. Liu and Y. Wang, {\em A note on modular forms and generalized anomaly cancellation formulas}, Sci. China Math. \textbf{56} (2013), no. 1, 55-65.

\bibitem{MH02} M. Mahowald and M. J. Hopkins, {\em The structure of $24$ dimensional manifolds having normal bundles which lift to $BO[8]$}, from ``Recent progress in homotopy theory'' (D. M. Davis, J. Morava, G. Nishida, W. S. Wilson, N. Yagita, editors), Contemp. Math. \textbf{293}, Amer. Math. Soc., Providence, RI (2002), 89-110.

\bibitem{Mili} A. Milivojevi\'c, {\em The weak form of Hirzebruch's prize question via rational surgery}, preprint (2021). 


\bibitem{Och87} S. Ochanine, {\em Sur les genres multiplicatifs d\'{e}finis par des int\'{e}grales elliptiques}, Topology \textbf{26}, no. 2 (1987), 143-151.

\bibitem{Och88} S. Ochanine, {\em Genres elliptiques equivariants}, in ``Elliptic Curves and Modular Forms in Algebraic Topology'' (P. S. Landweber, editor), Lecture Notes in Mathematics \textbf{1326}, Springer, Berlin (1988), 105-122.

\bibitem{Ruh} E. A. Ruh, {\em Almost flat manifolds}, J. Differential Geom. \textbf{17} (1982), 1-14.


\bibitem{Se88} G. Segal, {\em Elliptic cohomology (after Landweber-Stong, Ochanine, Witten, and others)}, S\'{e}minaire Bourbaki, Vol. 1987/88. 
Ast\'{e}risque No. 161-162 (1988), Exp. No. \textbf{695}, 4, 187-201 (1989).




\bibitem{Tau} C. H. Taubes. {\em {$S\sp 1$}-actions and elliptic genera}, Comm. Math. Phys. \textbf{122} (1989), 455-526.



\bibitem{Wall60} C. T. C. Wall, {\em Determination of the cobordism ring}, Ann. of Math. (2) \textbf{72} (1960), 292-311.

\bibitem{Wit87}  E. Witten, {\em Elliptic genera and quantum field theory}, Comm. Math. Phys. \textbf{109} (1987), 525-536. 

\bibitem{Wit88}  E. Witten, {\em The index of the Dirac operator in loop space}, ``Elliptic Curves and Modular Forms in Algebraic Topology'' (P. S. Landweber, editor), Lecture Notes in Mathematics \textbf{1326}, Springer, Berlin (1988), 161-181. 

\bibitem{Yau93} S. T. Yau, {\em Open problems in geometry}, Proc. Sympos. Pure Math., Part 1 \textbf{54} (1993), 1-28.
\end{thebibliography}
\end{document}